\documentclass[11pt]{amsart}

\usepackage{amsmath}
\usepackage{amssymb}
\usepackage{amsthm}
\usepackage{booktabs}
\usepackage[margin=1in]{geometry}
\usepackage{hyperref}

\theoremstyle{plain}
\newtheorem{theorem}{Theorem}[section]
\newtheorem{lemma}[theorem]{Lemma}
\newtheorem{corollary}[theorem]{Corollary}
\newtheorem{proposition}[theorem]{Proposition}

\theoremstyle{remark}
\newtheorem*{remark}{Remark}

\newcommand{\Z}{\mathbb{Z}}
\newcommand{\F}{\mathbb{F}}

\begin{document}

\title[Polychromatic blocks and van der Waerden numbers]{Polychromatic blocks and explicit lower bounds for multicolour van der
Waerden numbers}

\author{Seonbin Kim}
\address{Independent researcher}
\email{sean.kim.business@gmail.com}
\subjclass[2020]{05D10, 11B25}
\keywords{van der Waerden numbers, arithmetic progressions, polychromatic
colourings, power residues, lower bounds}
\date{}

\begin{abstract}
Let $W(r,k)$ be the least $N$ such that every $r$-colouring of
$\{1,\dots,N\}$ contains a monochromatic $k$-term arithmetic progression. Many
of the best lower bounds for $W(r,k)$ with $r\ge3$ come from blow-up
recurrences of Blankenship, Cummings and Taranchuk and of Xu, in which a
known colouring is lengthened by a periodic block. Call an $h$-colouring of
$\mathbb{Z}_q$ $k$-polychromatic if every $k$-term progression with nonzero
common difference meets each of the $h$ colours. The argument behind these
recurrences shows that such a colouring gives
$W(r,k)>q\,(W(r-\lceil r/h\rceil,k)-1)$ for all $r\ge h$. We show that
cubic, quartic, quintic and sextic residue colourings give $k$-polychromatic
blocks with $q$ much larger than $k$ (e.g.\ $q=1{,}376{,}317$ for
$h=3$ and $k=28$), and use them to get explicit lower bounds. For three
colours and $9\le k\le28$ we improve on every published bound we know of, among them
$W(3,9)>4{,}002{,}705$ and $W(3,28)>159{,}594{,}201{,}561{,}100{,}726$. For
$3\le r\le6$ and $9\le k\le28$, $77$ of the $80$ cases improve on what the
published bounds give under these recurrences and the other three tie. For $r=5$ and $12\le k\le15$ the best values come from palettes that
are cyclic intervals of colours indexed by quintic residues. We checked every block by exhaustive enumeration and
every base colouring by computer. We also prove that the block modulus is at most
$(W(2,k)-1)/(k-1)$, so the method does not improve the asymptotics. For large $k$ the largest cubic block modulus lies between $k^{4-\varepsilon}$ and
$k^2 4^k$.
\end{abstract}

\maketitle

\section{Introduction}

For integers $r,k \ge 1$ let $W(r,k)$ denote the least $N$ such that every
$r$-colouring of $\{1,\dots,N\}$ contains a monochromatic $k$-term arithmetic
progression. We put the number of colours first, whereas Monroe~\cite{Mon} writes
$W(k,r)$ for the same quantity. All our bounds are strict in the
following sense: an assertion $W(r,k) > N$ means that we exhibit an explicit
$r$-colouring of $N$ consecutive integers with no monochromatic $k$-term
arithmetic progression, so that $N < W(r,k)$.

Exact values of $W(r,k)$ are known only in a few cases, and nearly
all large lower bounds come from algebraic colourings. Rabung's power-residue
method~\cite{Rab} colours the nonzero residues modulo a prime $p$ by a
multiplicative character and checks a short combinatorial condition to rule out
monochromatic progressions. Rabung and
Lotts~\cite{RL} refined and automated the method, building on the cyclic ``zipper'' colourings of Herwig, Heule,
van Lambalgen and van Maaren~\cite{HHLM}. Monroe~\cite{Mon} ran a
distributed search for such colourings and produced two-colour tables up to $k=25$
and three-colour tables up to $k=17$.

Bounds can also be pushed up by \emph{blowing up} a known
colouring by a periodic block. The recurrence of Blankenship, Cummings and Taranchuk
(BCT)~\cite{BCT}
\[
  W(r,k) > p\,\bigl(W(r-\lceil r/p\rceil, k) - 1\bigr),
  \qquad p \le k \text{ prime},
\]
blows up by a block of length $p$. In the remark following their Theorem~2.1 the
authors note that a larger prime $p$ is admissible provided
$p - \lfloor p/r\rfloor < k$. Xu's product construction~\cite{Xu}
combines a cyclic colouring of $\Z_p$ with a colouring of an interval in the same way. Most recently, Pouly~\cite{Pou} gave direct ternary Rabung
certificates and a recurrence closure for $12 \le k \le 28$. These are the
strongest published three-colour bounds we know of. They include
$W(3,27) > 2{,}508{,}909{,}833{,}037$, and Pouly also gives six-colour products such as
$W(6,20)>9{,}040{,}715{,}030{,}019{,}241{,}602$.

\subsection*{This note}
A block on $\Z_q$ has to be an $h$-colouring in which every
$k$-progression of nonzero difference meets each of the $h$ colours. We call such a
colouring \emph{$k$-polychromatic}. Any such block
gives the palette recurrence of Theorem~\ref{thm:palette}, whose three-colour
specialisation (Corollary~\ref{cor:block} with $r=h=3$) is
\[
  W(3,k) > q\,(W(2,k)-1).
\]
Power-residue colourings are $k$-polychromatic for $q$ far larger than
$k$ (up to about $1.38\times 10^{6}$ in our search), whereas the blocks
of~\cite{BCT} require $q-\lfloor q/3\rfloor<k$ and~\cite{Pou} uses only primes
$q\le k$. With the best known lower-order inputs this improves every
published three-colour bound we know of (those of~\cite{Mon,Pou}) for $9 \le k \le 28$.
The gain is a factor of about $2.4$ (at $k=10$) up to about $3.7\times 10^{4}$ (at
$k=28$, measured against the best bound derivable from published inputs, see
Section~\ref{sec:results}). Most multicolour cells for
$r=4,5,6$ improve too (Section~\ref{sec:results}). For $W(5,k)$ with
$12\le k\le15$ we use palettes that are cyclic intervals of colours
(Lemma~\ref{lem:arc}). All constructions are finite and
explicit. In Section~\ref{sec:limits} we bound the block modulus
in terms of $W(2,k)$, so no asymptotic growth rate changes. We also give
character-sum bounds for the size of the cubic blocks.

\section{A polychromatic blow-up}\label{sec:blowup}

Throughout, a \emph{$k$-AP} in $\Z_q$ is a sequence $a, a+d, \dots, a+(k-1)d$
with $d \ne 0$ in $\Z_q$, and $d$ is called its common difference. Write
$[r]=\{1,\dots,r\}$. A map $m\colon \Z_q \to [h]$ is \emph{$k$-polychromatic} if
every $k$-AP in $\Z_q$ meets each of the $h$ classes $m^{-1}(1),\dots,m^{-1}(h)$.

The following is not new. The two-case argument below is the one behind BCT's
Lemma~2.1 and Theorem~2.1~\cite{BCT} and Xu's product~\cite{Xu}. In group form it
is the case $\varepsilon=0$ of the shifted product construction of Fox and
Hunter~\cite[Lemma~4.3]{FH}, and it follows from Hunter's fibre
lemma~\cite[Lemma~3.2]{Hun} applied to each colour class. Colourings by
$s$-subsets of the colour set, as below, were studied for van der Waerden
numbers by Xiu, Li, Liang and Xu~\cite{XLLX}. We state the interval version in a
palette form convenient for the tables and include the short proof.

\begin{theorem}[palette form]\label{thm:palette}
Let $q,n \ge 1$ and $r \ge s \ge 1$. Let $S$ assign to each $u \in \Z_q$ an
$s$-element subset $S(u) \subseteq [r]$, and for each colour $c \in [r]$ put
\[
  A_c = \{\, u \in \Z_q : c \in S(u) \,\}.
\]
Assume
\begin{itemize}
  \item[$(\star)$] for every $c \in [r]$, the set $A_c$ contains no $k$-AP, i.e.\
  no progression $u, u+d, \dots, u+(k-1)d$ with $d \not\equiv 0 \pmod q$ lies
  entirely in $A_c$.
\end{itemize}
Let $\beta\colon \{0,\dots,n-1\} \to [s]$ be an $s$-colouring with no
monochromatic $k$-AP, and for each $u$ fix a bijection $\phi_u\colon [s] \to
S(u)$. Define $\chi\colon \{0,\dots,qn-1\} \to [r]$ by
\[
  \chi(qt+u) = \phi_u\bigl(\beta(t)\bigr), \qquad 0 \le u < q,\ 0 \le t < n .
\]
Then $\chi$ has no monochromatic $k$-AP. This gives $W(r,k) > qn$, and taking
$n = W(s,k)-1$ with $\beta$ an extremal $s$-colouring gives
\[
  W(r,k) > q\,\bigl(W(s,k)-1\bigr).
\]
\end{theorem}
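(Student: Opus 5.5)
Suppose, for contradiction, that $\chi$ has a monochromatic $k$-AP $x_i = x_0 + iD$ ($0\le i<k$, $D\ge1$) of colour $c$, with every $x_i$ in $\{0,\dots,qn-1\}$. Write $x_i = qt_i + u_i$ with $0\le u_i<q$ and $0\le t_i<n$. The argument splits into two cases according to whether $q$ divides $D$, following the two-case argument behind BCT's Lemma~2.1.

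\emph{Case 1: $q\nmid D$.} Reducing modulo $q$, the residues $u_i \equiv x_0 + iD \pmod q$ form a $k$-AP in $\Z_q$ with nonzero common difference $D \bmod q$. For each $i$ we have $c=\chi(x_i)=\phi_{u_i}(\beta(t_i))\in S(u_i)$, so $u_i\in A_c$. The whole progression therefore lies in $A_c$, which contradicts $(\star)$. Repeated values of $u_i$ are harmless here, since $(\star)$ excludes every sequence of this form, not only those with distinct terms.

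\emph{Case 2: $q\mid D$.} Write $D=qE$ with $E\ge1$. All the $u_i$ are then equal to one residue $u$, and $t_i=t_0+iE$ is a genuine $k$-AP in $\{0,\dots,n-1\}$. Since $\phi_u$ is a bijection, hence injective, the equality $\phi_u(\beta(t_i))=c$ for every $i$ forces $\beta(t_i)=\phi_u^{-1}(c)$ to be constant. This gives a monochromatic $k$-AP for $\beta$, contrary to hypothesis.

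Both cases are impossible, so $\chi$ is an $r$-colouring of $qn$ consecutive integers with no monochromatic $k$-AP, and $W(r,k)>qn$. Taking $n=W(s,k)-1$ and $\beta$ an extremal colouring gives the final bound. No step is a real obstacle. The only care needed is the bookkeeping: Case~1 uses only the membership $c\in S(u_i)$, while Case~2 is where injectivity of the fixed bijection $\phi_u$ (the same $u$ for every term) is essential.
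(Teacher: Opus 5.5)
Your proof is correct and follows the paper's argument essentially verbatim: the same split on whether $q \mid D$, using only the membership $c \in S(u_i)$ (hence $u_i \in A_c$) in the first case, and injectivity of the single bijection $\phi_u$ together with $t_i = t_0 + iE$ in the second. Your remark that repeated residues are harmless because $(\star)$ excludes all such sequences is a small clarification that the paper leaves implicit.
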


\begin{proof}
Suppose for contradiction that $x_0,\dots,x_{k-1}$ with $x_i = x_0 + iD$,
$D \ge 1$, is a monochromatic $k$-AP inside $\{0,\dots,qn-1\}$, of common colour
$c$. Write $r_i = x_i \bmod q$ and $t_i = \lfloor x_i/q\rfloor$ for the residue
and the block index of each term, so that
$\chi(x_i) = \phi_{r_i}(\beta(t_i)) \in S(r_i)$.

\emph{Case $D \not\equiv 0 \pmod q$.} Since $\chi(x_i) = c$ for all $i$ and
$\chi(x_i) \in S(r_i)$, we have $c \in S(r_i)$, i.e.\ $r_i \in A_c$, for every
$i$. But $r_i = (x_0 + iD) \bmod q$ is a $k$-AP in $\Z_q$ with nonzero common
difference $D \bmod q$, so it lies entirely in $A_c$, contradicting $(\star)$.

\emph{Case $D \equiv 0 \pmod q$,} say $D = qD'$ with $D' \ge 1$. Then all terms
share one residue $u := r_0 = \dots = r_{k-1}$, and $\phi_u$ is a fixed
injection. From $\chi(x_i) = \phi_u(\beta(t_i)) = c$ and the injectivity of
$\phi_u$ we get $\beta(t_i) = \phi_u^{-1}(c)$ constant. Now
$t_i = \lfloor x_i/q\rfloor = t_0 + iD'$, so $t_0,\dots,t_0+(k-1)D'$ is a $k$-AP
of common difference $D'$ inside $\{0,\dots,n-1\}$ on which $\beta$ is constant,
contradicting the choice of $\beta$.

Each case gives a contradiction, which shows that $\chi$ has no monochromatic $k$-AP
and that $qn < W(r,k)$. Choosing $\beta$ extremal of length $n = W(s,k)-1$ gives
$W(r,k) > q(W(s,k)-1)$.
\end{proof}

\begin{corollary}[polychromatic block]\label{cor:block}
Let $m\colon \Z_q \to [h]$ be $k$-polychromatic. Then for every $r \ge h$,
\[
  W(r,k) > q\,\bigl(W(r-\lceil r/h\rceil,\,k)-1\bigr).
\]
\end{corollary}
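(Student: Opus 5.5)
We derive the corollary from Theorem~\ref{thm:palette} by converting the $h$ colour classes of $m$ into a palette assignment with $s = r-\lceil r/h\rceil$. Partition $[r]$ into $h$ groups $G_1,\dots,G_h$, each of size at most $\lceil r/h\rceil$. One concrete choice is to let $G_j$ consist of the colours $c\in[r]$ with $c\equiv j \pmod h$. Each group then has size $\lfloor r/h\rfloor$ or $\lceil r/h\rceil$, and every group is nonempty because $r\ge h$.

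For $u\in\Z_q$ with $m(u)=j$, we want $S(u)$ to omit the colours of $G_j$. The complement $[r]\setminus G_j$ has size $r-|G_j|\ge r-\lceil r/h\rceil=s$. We therefore take $S(u)$ to be any $s$-element subset of $[r]\setminus G_j$, fixed once for each $j$.

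The key step is to verify condition $(\star)$. Fix a colour $c\in[r]$ and let $j$ be the unique index with $c\in G_j$. If $c\in S(u)$, then $c\notin G_{m(u)}$, so $m(u)\ne j$. Hence $A_c\subseteq \Z_q\setminus m^{-1}(j)$. Since $m$ is $k$-polychromatic, every $k$-AP in $\Z_q$ with nonzero difference meets $m^{-1}(j)$, so no such progression lies inside $A_c$. This is exactly $(\star)$.

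It remains to check the side condition $r\ge s\ge 1$ of Theorem~\ref{thm:palette}. We have $s\le r$ trivially. If $s=0$, that is $r\le \lceil r/h\rceil$, then $h=1$ or $r$ is small. In that case $W(0,k)$ should be read with the convention $W(0,k)=1$, and the inequality $W(r,k)>0$ holds trivially. I expect this degenerate bookkeeping to be the only point needing care: either we note that $s\ge1$ whenever $h\ge2$ and $r\ge2$ (since $\lceil r/h\rceil<r$ then), or we handle $s=0$ directly. With $s\ge1$, applying Theorem~\ref{thm:palette} with $\beta$ an extremal $s$-colouring of length $W(s,k)-1$ gives $W(r,k)>q\bigl(W(r-\lceil r/h\rceil,k)-1\bigr)$.
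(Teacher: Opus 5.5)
Your proposal is correct and follows the paper's proof essentially step for step: the same partition of $[r]$ into $h$ blocks of size at most $\lceil r/h\rceil$, the same choice of $S(u)$ as an $s$-subset of the complement of the block indexed by $m(u)$, and the same inclusion $A_c\subseteq\Z_q\setminus m^{-1}(j)$ to obtain $(\star)$. Your remark on the degenerate case $s=0$ is a harmless addition that the paper leaves implicit; it occurs exactly when $h=1$, and there the bound is trivial.
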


\begin{proof}
Partition $[r]$ into $h$ blocks $B_1,\dots,B_h$ with $|B_j| \le \lceil
r/h\rceil$, and set $s = r - \lceil r/h\rceil$. For each $u \in \Z_q$ choose any
$s$-subset $S(u) \subseteq [r] \setminus B_{m(u)}$, which exists because
$\lvert [r]\setminus B_{m(u)}\rvert = r - |B_{m(u)}| \ge r - \lceil r/h\rceil =
s$. If $c \in B_j$, then $c \in S(u)$ forces $c \notin B_{m(u)}$, that is,
$m(u) \ne j$. This gives
\[
  A_c \subseteq \{\, u : m(u) \ne j \,\} = \Z_q \setminus m^{-1}(j).
\]
Because $m$ is $k$-polychromatic, every $k$-AP meets $m^{-1}(j)$, so no $k$-AP
lies in $A_c$. Condition $(\star)$ holds and Theorem~\ref{thm:palette} applies.
\end{proof}

Taking $h=r$ (so $\lceil r/h\rceil = 1$ and $s = r-1$) recovers the
colour-stripping recurrence $W(r,k) > q(W(r-1,k)-1)$ from any $r$-polychromatic
block on $\Z_q$.

\begin{remark}[the $r=3$ mod-$3$ block]
For $r=h=3$ we have $s=2$ and singleton blocks $B_j=\{j\}$, so that
$S(u)=\{1,2,3\}\setminus\{m(u)\}$. Identifying colours with $\Z/3$ and taking
$\phi_u(\beta)=\bigl(m(u)+1+\beta\bigr)\bmod 3$ for $\beta\in\{0,1\}$, the
construction is the ``mod-$3$ shift''
$\chi(qt+u)=\bigl(m(u)+1+\beta(t)\bigr)\bmod 3$ used in the three-colour tables
below.
\end{remark}

\subsection*{Special cases}
\begin{itemize}
  \item \emph{BCT.} If $q=h=p$ is a prime with $p\le k$ and $m$ is the identity
  $m(j)=j$, then every $k$-AP of nonzero difference modulo $p$ runs through all
  $p$ residues (its difference is invertible and $k\ge p$) and so meets each of the $p$
  colours. This is BCT's Theorem~2.1~\cite{BCT}.
  \item \emph{Pigeonhole blocks.} More generally, let $q$ be prime and
  $m(j)=j\bmod h$. A $k$-AP of nonzero difference modulo $q$ has $\min(k,q)$
  distinct terms, so if $q-\lfloor q/h\rfloor < k$ no colour class can be
  avoided and $m$ is $k$-polychromatic. Here $q$ may exceed $k$ (e.g.\ $q=37$,
  $h=3$, $k=26$, where $37-\lfloor 37/3\rfloor = 25 < 26$), which is the
  admissible-prime remark following~\cite[Thm.~2.1]{BCT}.
  \item \emph{Xu's product.} As stated by Monroe~\cite[Section~2]{Mon}, Xu's
  construction is: if $p$ is a prime and $\Z_p$ has an $a$-colouring $m$ with no
  monochromatic nontrivial $k$-AP, then $W(at,k) > p\,(W(t,k)-1)+1$ (colours
  first). Theorem~\ref{thm:palette} with $r=at$, palettes of size $t$ and
  disjoint palettes $S(u)=B_{m(u)}$, $|B_j|=t$, recovers this recurrence up to
  the additive $1$, and Corollary~\ref{cor:block} with $h=2$ and $r$ even is
  its case $a=2$.
\end{itemize}

\subsection*{Relation to prior work}
The two-case argument above underlies BCT's Lemma~2.1 and
Theorem~2.1~\cite{BCT}, Xu's product~\cite{Xu}, Hunter's Lemma~3.2 (AP-free
fibres over AP-free support)~\cite{Hun} and the cyclic unrolling in
Lemma~2 of Campos, Fox and Schildkraut~\cite{CFS}. Lemmas~4.2 and~4.3 of
Fox and Hunter~\cite{FH} are randomised versions in general groups that tolerate a small error, and
their Corollary~3.3 gives polychromatic colourings of exponential size by the
local lemma. Fox and Hunter also suggest that their methods might give new
constructions for small van der Waerden numbers with at least three colours.
New here are the explicit $h\ge 3$ power-residue polychromatic blocks with
$q$ much larger than $k$, the tables obtained from them with their verification, and the
bounds of Section~\ref{sec:limits}. We are not aware of earlier use of
$h\ge 3$ polychromatic blocks with $q>k$ for explicit van der Waerden lower
bounds.

\section{Power-residue blocks}\label{sec:residue}

\begin{lemma}\label{lem:window}
Let $q$ be a prime, $e \mid q-1$, and fix a primitive root $g$ of $\F_q$. Let
$\psi\colon \F_q^{*} \to \Z_e$ be the index modulo $e$, i.e.\ $\psi(g^a)=a\bmod
e$, so that $\psi(xy)=\psi(x)+\psi(y)$ in $\Z_e$. Let $\sigma\colon \Z_e \to [h]$
be surjective and fix $z \in [h]$. Define $m\colon \Z_q \to [h]$ by
$m(y)=\sigma(\psi(y))$ for $y\ne 0$ and $m(0)=z$. For $s \in \Z_e$ set
$f_s(y)=\sigma(s+\psi(y))$ for $y\ne 0$ and $f_s(0)=z$. Then $m$ is
$k$-polychromatic if and only if, for every $s \in \Z_e$, every cyclic window
$\{u,u+1,\dots,u+k-1\} \pmod q$ meets each of the $h$ colours under $f_s$.
\end{lemma}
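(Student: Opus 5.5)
The plan is to reduce an arbitrary $k$-AP in $\Z_q$ to a cyclic window by a single affine change of variables. The multiplicative structure of $\psi$ then turns the colouring $m$ into one of the shifted colourings $f_s$.

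Fix a $k$-AP $P=\{a, a+d, \dots, a+(k-1)d\}$ with $d \neq 0$. Since $q$ is prime, $d$ is invertible in $\F_q$, so we may write $a = ud$ with $u = a d^{-1}$. Then $P = d\cdot W_u$, where $W_u=\{u,u+1,\dots,u+k-1\}$ is a cyclic window mod $q$. For $y \in W_u$, $y\neq 0$, we have
\[
m(dy)=\sigma(\psi(d)+\psi(y))=f_{s}(y), \qquad s=\psi(d),
\]
and for $y=0$ we have $m(d\cdot 0)=m(0)=z=f_s(0)$. Hence $m(dy)=f_{\psi(d)}(y)$ for all $y\in\Z_q$. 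The multiset of colours on $P$ under $m$ is therefore exactly the multiset of colours on $W_u$ under $f_{\psi(d)}$, so $P$ meets every colour under $m$ if and only if $W_u$ meets every colour under $f_{\psi(d)}$.

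For the ``if'' direction, assume every window meets all $h$ colours under every $f_s$. Applying this with $s=\psi(d)$ and the window $W_{ad^{-1}}$ shows that $P$ meets all colours, so $m$ is $k$-polychromatic.

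For the ``only if'' direction, take any $s\in\Z_e$ and any window $W_u$. Since $\psi$ is surjective onto $\Z_e$ (it is reduction of the index), we can pick $d\neq 0$ with $\psi(d)=s$, for instance $d=g^s$. Then $d\cdot W_u$ is a $k$-AP with nonzero difference $d$, which meets all colours under $m$ by hypothesis. By the identity above, $W_u$ meets all colours under $f_s$.

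Neither direction is a real obstacle. The only points to handle carefully are the zero element, which is covered because $m(0)=f_s(0)=z$ for every $s$, and the degenerate case $k>q$. In that case the terms of $P$ repeat, but ``meets each class'' is a statement about the set of values, and both $P$ and the window are read as sequences mod $q$ in the same way, so the correspondence $y\mapsto dy$ still matches them term by term.
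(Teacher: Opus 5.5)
Your proof is correct and follows essentially the same route as the paper: the substitution $u=ad^{-1}$ with $a+id=d(u+i)$, the identity $m(dy)=f_{\psi(d)}(y)$ (with $y=0$ handled by $m(0)=f_s(0)=z$), and the fact that $\psi(d)$ takes every value in $\Z_e$ for the converse. Your explicit choice $d=g^{s}$ in the ``only if'' direction just spells out the paper's remark that, as $d$ varies, $\psi(d)$ covers all of $\Z_e$ and $ad^{-1}$ covers all of $\Z_q$.
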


\begin{proof}
Fix a $k$-AP $a, a+d, \dots, a+(k-1)d$ in $\Z_q$ with $d \ne 0$, and set
$u = a\,d^{-1}$, so $a+id = d(u+i)$. If $u+i \ne 0$ then by multiplicativity
\[
  m(a+id) = \sigma\bigl(\psi(d)+\psi(u+i)\bigr) = f_s(u+i), \qquad s = \psi(d),
\]
while if $u+i=0$ then $a+id=0$ and $m(a+id)=z=f_s(0)$. The colour sequence
of the progression is then $f_s(u),f_s(u+1),\dots,f_s(u+k-1)$, read along the
consecutive residues $u,\dots,u+k-1$. As $d$ ranges over $\F_q^{*}$ the index
$s=\psi(d)$ ranges over all of $\Z_e$ and $u=ad^{-1}$ ranges over all of $\Z_q$.
So every $k$-AP meets each of the $h$ colours if and only if, for each $s \in \Z_e$,
every length-$k$ cyclic window meets each of the $h$ colours under $f_s$.
\end{proof}

The criterion is an $O(eq)$ test: for each of the $e$ shifts $s$ one scans the
residues $0,1,\dots,q-1$ once and keeps track of the colours present in the current
length-$k$ window. It is also monotone in $k$: a $k$-polychromatic colouring is
$(k+1)$-polychromatic, because a $(k+1)$-AP contains a $k$-AP.

The tables of Section~\ref{sec:results} use two parameter regimes. The simplest
is \emph{$e=h$ with $\sigma=\mathrm{id}$}: the colour of $y\ne 0$ is just its
index modulo $h$. For $h=3$ this is the cubic-residue colouring
\[
  m(y)=\begin{cases}
    0 & y^{(q-1)/3}=1,\\
    1 & y^{(q-1)/3}=\omega,\\
    2 & y^{(q-1)/3}=\omega^2,
  \end{cases}
  \qquad m(0)=0,
\]
with $\omega$ a primitive cube root of unity, and it supplies all the $h=3$
blocks in Tables~\ref{tab:main}, \ref{tab:multi4} and~\ref{tab:multi56}; the
analogous quartic, quintic and sextic colourings supply the $h=4,5,6$ blocks.
The second regime \emph{groups} a finer index class into fewer colours via a
non-injective $\sigma$. We searched $e=h$ for $3\le h\le 6$ together with the
groupings $e=6\to h=3$, $e=8\to h=4$ and $e=10\to h=5$. The
groupings never exceeded the plain power-residue maxima, and the tables use
$\sigma=\mathrm{id}$.

\subsection*{Arc palettes}
Theorem~\ref{thm:palette} does not require the palettes to come from a
partition of the colours. Identify the colours with $\Z_r$ and write
$I_s(c)=\{c,c+1,\dots,c+s-1\}\subseteq\Z_r$ for the cyclic interval of length $s$
starting at $c$.

\begin{lemma}\label{lem:arc}
Let $q$ be a prime with $r\mid q-1$, let $1\le s<r$, fix a primitive root $g$
of $\F_q$ and a unit $a$ of $\Z_r$, and let $\psi(g^i)=ai\bmod r$. Fix
$z\in\Z_r$ and put $S(y)=I_s(\psi(y))$ for $y\ne0$ and $S(0)=I_s(z)$. Then
condition $(\star)$ of Theorem~\ref{thm:palette} holds if and only if
\begin{enumerate}
\item[(i)] for every window $W=\{u,\dots,u+k-1\}$ of $k$ consecutive residues
not containing $0$, the set $\psi(W)$ is not contained in any cyclic interval of
length $s$; and
\item[(ii)] for every window $W$ of $k$ consecutive residues containing $0$ and
every $t\in\Z_r$, there is no $c\in S(0)$ with $c\in t+S(y)$ for all
$y\in W\setminus\{0\}$.
\end{enumerate}
In that case $W(r,k)>q\,(W(s,k)-1)$.
\end{lemma}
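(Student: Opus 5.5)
The plan is to repeat the multiplicative reduction from the proof of Lemma~\ref{lem:window}, now applied to palettes instead of single colours. Everything rests on one identity. For $d\ne0$ and $y\ne0$ we have $\psi(dy)=\psi(d)+\psi(y)$, and $I_s$ is translation-equivariant, so
\[
  S(dy)=I_s\bigl(\psi(d)+\psi(y)\bigr)=\psi(d)+S(y).
\]
For $y=0$ the palette $S(0)=I_s(z)$ does not move.

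\emph{Step 1: rewrite $(\star)$ as a statement about windows.} Take a $k$-AP $a,a+d,\dots,a+(k-1)d$ with $d\ne0$, and put $u=ad^{-1}$ and $W=\{u,\dots,u+k-1\}$, so that $a+id=d(u+i)$. Let $t=\psi(d)$. The progression lies in $A_c$ exactly when
\begin{itemize}
\item $c\in t+S(y)$ for every $y\in W\setminus\{0\}$, and
\item if $0\in W$, also $c\in S(0)$.
\end{itemize}
As in Lemma~\ref{lem:window}, $u$ runs over all of $\Z_q$ as $a$ varies. Since $r\mid q-1$ and $a$ is a unit of $\Z_r$, the map $\psi$ is onto $\Z_r$, so $t=\psi(d)$ takes every value in $\Z_r$. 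Hence $(\star)$ fails if and only if there are a window $W$, a shift $t\in\Z_r$ and a colour $c$ satisfying the two conditions above.

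\emph{Step 2: windows containing $0$.} In this case the condition from Step 1 is word for word the negation of (ii).

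\emph{Step 3: windows avoiding $0$.} Substitute $c'=c-t$. As $t$ is free, the question becomes whether
\[
  \bigcap_{y\in W} I_s\bigl(\psi(y)\bigr)\neq\emptyset .
\]
We need the small combinatorial fact that, for a set $V\subseteq\Z_r$ and $s<r$, the arcs $I_s(v)$ with $v\in V$ share a point if and only if $V$ lies in some cyclic interval of length $s$. The argument is that $c'\in I_s(v)$ is equivalent to $v\in\{c'-s+1,\dots,c'\}$, and the latter is itself a cyclic interval of length $s$. So the intersection contains $c'$ exactly when $V\subseteq I_s(c'-s+1)$. Applying this with $V=\psi(W)$ turns the failure condition into the negation of (i).

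\emph{Step 4: conclude.} Steps 2 and 3 together give the claimed equivalence. The final inequality is then Theorem~\ref{thm:palette}: each $S(u)$ has exactly $s$ elements because $s<r$, so bijections $\phi_u\colon[s]\to S(u)$ exist, and we take $\beta$ to be an extremal $s$-colouring.

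The only step needing care is Step 3. One has to check that the arc-intersection criterion is applied to the correct translate, namely $c-t$ rather than $c+t$, and that it remains valid when the window is long enough to wrap around modulo $q$. The windows are just multisets of residues fed into $\psi$, so wrap-around does not affect the argument.
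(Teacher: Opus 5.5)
Your proposal is correct and follows essentially the same route as the paper. You use the translation identity $S(dy)=\psi(d)+S(y)$ and reduce $(\star)$ to pairs (window $W$, shift $t$) via $u=ad^{-1}$ and the surjectivity of $\psi$; the case $0\in W$ is then literally the negation of (ii), and for $0\notin W$ you rewrite $c\in t+I_s(\psi(y))$ as $\psi(y)\in I_s(c-t-s+1)$ so that $t$ drops out. The only differences are cosmetic: you spell out the arc-intersection fact and the existence of the bijections $\phi_u$, which the paper leaves implicit, and you reuse the letter $a$ for the first term of the progression, which the paper calls $a'$ to avoid clashing with the unit $a\in\Z_r$.
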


\begin{proof}
As in Lemma~\ref{lem:window}, write a $k$-AP as $a'+id=d(u+i)$ with $d\ne0$
and $u=a'd^{-1}$, and put $t=\psi(d)$. For $u+i\ne0$ we have
$S(a'+id)=t+S(u+i)$, while the term with $u+i=0$ is $0$ and has palette $S(0)$.
So a colour $c$ lies in every palette along the progression if and only if
$c\in t+S(y)$ for all nonzero $y$ in the window $W=\{u,\dots,u+k-1\}$, and also
$c\in S(0)$ when $0\in W$. Now $c\in t+I_s(\psi(y))$ means
$\psi(y)\in I_s(c-t-s+1)$. If $0\notin W$, a common colour exists for some
$c$ exactly when $\psi(W)$ lies in some interval of length $s$, independently
of $t$. If $0\in W$, the condition is exactly (ii). Since $t=\psi(d)$ takes
every value of $\Z_r$ and $u$ every value of $\Z_q$ as $(a',d)$ varies, the two
conditions together are equivalent to $(\star)$. The bound is then
Theorem~\ref{thm:palette}.
\end{proof}

For $s=r-1$ condition (i) says that every such window meets each of the $r$ index
classes, which is the case $e=h=r$ of Lemma~\ref{lem:window}. For $r=5$ and
$s=3$ the lemma blows up a three-colouring into five colours using a block of
modulus much larger than the quintic blocks of the same $k$. A scan of all
primes $q\le10^5$ with $q\equiv1\pmod5$, $a\in\{1,2\}$ and every $z$ gives the
largest moduli $q=1931$, $2861$, $6131$, $9791$ for $k=12,13,14,15$, with
$(g,a,z)=(2,2,0)$, $(2,1,0)$, $(2,1,0)$, $(11,2,0)$ respectively. Together with
the three-colour lengths of Table~\ref{tab:main} this gives
\begin{align*}
W(5,12)&>1{,}340{,}686{,}056{,}819, & W(5,13)&>7{,}362{,}778{,}358{,}783,\\
W(5,14)&>48{,}771{,}558{,}421{,}350, & W(5,15)&>350{,}569{,}892{,}496{,}577.
\end{align*}
Partial searches for other cells found nothing further. The cells tried were $(r,s)=(4,2)$ for $k\le13$, $(5,3)$
for $16\le k\le20$ and $(6,4)$ for $16\le k\le25$, over the primes between the
modulus needed to improve on the tables and four (for $(4,2)$, eight) times that
modulus. These searches are not exhaustive.

\section{Base colourings}\label{sec:base}

The blow-up of Section~\ref{sec:blowup} needs a lower-order input $\beta$. The
largest ones come from two classical two-colour families, which we also reuse as
$h=2$ blocks.

\subsection*{Legendre bases}

\begin{lemma}\label{lem:legendre}
Let $p$ be a prime with $k < p$, and let $\left(\tfrac{\cdot}{p}\right)$ denote
the Legendre symbol. Define $\beta$ on $\{0,1,\dots,(k-1)p\}$ by
\[
  \beta(n) =
  \begin{cases}
    \bigl[\left(\tfrac{n}{p}\right) = -1\bigr] & \text{if } p \nmid n,\\[2pt]
    j \bmod 2 & \text{if } n = jp,
  \end{cases}
\]
so that $n$ ranges over $(k-1)p + 1$ integers. Write $\chi = \beta$ and suppose that
\begin{itemize}
  \item[(A)] the longest run of equal values among $\chi(1),\dots,\chi(p-1)$ has
  length at most $k-1$; and
  \item[(B)] letting $L$ be the length of the initial run of $\chi$ starting at
  $1$ and $R$ the length of the final run ending at $p-1$, one has
  $L + R \le k-2$ if $\chi(1) = \chi(-1)$, and $L \le k-2$, $R \le k-2$
  otherwise.
\end{itemize}
Then $\beta$ has no monochromatic $k$-AP, so $W(2,k) > (k-1)p + 1$.
\end{lemma}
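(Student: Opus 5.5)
Let $x_i=x_0+iD$, $0\le i\le k-1$, be a $k$-AP in $\{0,\dots,(k-1)p\}$ with $D\ge1$, and suppose it is monochromatic. We split according to whether $p$ divides $D$.

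\emph{Case $p\mid D$.} Then all terms lie in one residue class modulo $p$. If that class is $0$, the terms are $jp$ for $k$ distinct $j$ in an AP. The $x_i$ fit in $\{0,\dots,(k-1)p\}$, so $D=p$ exactly and the $j$ are consecutive. Their parities alternate, so for $k\ge2$ the progression is not monochromatic. If the class is nonzero, $\beta$ is constant along it, but $D\ge p$ forces the span to be at least $(k-1)p$. The only possibility is $x_0=0$, which has residue $0$, a contradiction; so this subcase cannot occur.

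\emph{Case $p\nmid D$.} Let $d=D\bmod p$, which is invertible, and use the multiplicativity of the Legendre symbol as in Lemma~\ref{lem:window}. Write $x_i\equiv d(u+i)\pmod p$ with $u=x_0d^{-1}$. For terms with $p\nmid x_i$ we have $\chi(x_i)$ equal to $\chi(u+i)$ or its complement, uniformly in $i$, depending on $\left(\tfrac{d}{p}\right)$. So the nonzero-residue terms are monochromatic exactly when the window $u,\dots,u+k-1$ of consecutive residues is constant under $\chi$, away from $0$. Since $k<p$, the window contains residue $0$ at most once.
\begin{itemize}
\item If the window avoids $0$, it is a run of $k$ equal values inside $1,\dots,p-1$, which contradicts (A).
\item If it contains $0$ at position $i_0$, the window splits into a stretch ending at $p-1$ and a stretch starting at $1$, with lengths summing to $k-1$. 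If exactly one side is empty, the other side has length $k-1$ and must lie within the initial run $L$ or the final run $R$. This contradicts $L,R\le k-2$, which hold in both alternatives of (B). If both sides are nonempty, they must have the same colour. When $\chi(1)\neq\chi(-1)$ the initial run and the final run have different colours, so this is impossible. When $\chi(1)=\chi(-1)$ we would need $L+R\ge k-1$, contradicting $L+R\le k-2$.
\end{itemize}
We do not need to track the colour of the term $x_{i_0}$ with $p\mid x_{i_0}$, since the contradiction already comes from the other terms. The case $k-1$ terms on one side with $k\ge 3$ is handled above, and the degenerate small $k$ cases are immediate.

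The main obstacle is the bookkeeping in the wrap-around subcase. One must check that the reduction $x_i\mapsto u+i$ really reads the residues in cyclic order through $0$. One must also check that the sign flip from $\left(\tfrac{d}{p}\right)=-1$ is applied uniformly: it complements both runs, so it preserves whether the initial and final runs have equal colours. Hence (B) is invariant under this flip. Finally, the length count gives $W(2,k)>(k-1)p+1$, since the domain has $(k-1)p+1$ elements.
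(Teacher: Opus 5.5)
Your proof is correct and follows essentially the same route as the paper. You split according to whether $p\mid D$; in that case the span constraint forces the progression $0,p,\dots,(k-1)p$, whose colours alternate. Otherwise you dilate by $d^{-1}$ modulo $p$ to reduce to a window of consecutive residues, using (A) when the window avoids $0$ and the run lengths $L,R$ with (B) in the wrap-around case. The paper phrases the first split as $d=p$ versus $d<p$ and organises the wrap-around case by first deriving $L+R\ge k-1$, but the argument is the same.
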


\begin{proof}
Suppose $a, a+d, \dots, a+(k-1)d$ is a monochromatic $k$-AP in
$\{0,\dots,(k-1)p\}$ with $d \ge 1$.

\emph{Case $d = p$.} The $k$ terms lie in $\{0,\dots,(k-1)p\}$ and are spaced by
$p$, so they are exactly the multiples $0, p, \dots, (k-1)p$. Their colours are
$0,1,0,1,\dots$ by the definition of $\beta$ on multiples of $p$, which alternate
and are not constant for $k \ge 2$.

\emph{Case $d < p$.} (Since $(k-1)d \le (k-1)p$, these two cases are exhaustive.)
Write $\varepsilon(n)=\left(\tfrac{n}{p}\right)\in\{\pm1\}$ for $p\nmid n$, so that
$\beta(n)=\beta(n')$ if and only if $\varepsilon(n)=\varepsilon(n')$. Put
$u \equiv a\,d^{-1} \pmod p$. Then $a+id \equiv d(u+i)$, and for $u+i\not\equiv 0$
we have $\varepsilon(a+id)=\varepsilon(d)\,\varepsilon(u+i)$. Because $k<p$, at most
one index $i$ satisfies $u+i\equiv 0 \pmod p$.

If no index does, the residues $u,u+1,\dots,u+k-1$ are $k$ consecutive nonzero
residues, i.e.\ a block of $k$ consecutive integers in $\{1,\dots,p-1\}$, on which
$\varepsilon$ is constant. This is a run of length $k$ in $\chi(1),\dots,\chi(p-1)$,
contradicting~(A).

Otherwise let $i_0$ be the unique index with $u+i_0\equiv 0$. The term $a+i_0d$
is a multiple of $p$, and its colour plays no role below. The $i_0$ terms before
it correspond to the residues $-i_0,\dots,-1$ and the $k-1-i_0$ terms after it to
the residues $1,\dots,k-1-i_0$. Constancy of the colour forces $\chi$ to be
constant on $\{p-i_0,\dots,p-1\}$ and on $\{1,\dots,k-1-i_0\}$, so $i_0\le R$ and
$k-1-i_0\le L$, giving $L+R\ge k-1$. If $\chi(1)=\chi(-1)$ this contradicts
$L+R\le k-2$. If $\chi(1)\ne\chi(-1)$, the two parts cannot both be nonempty
(their colours would be determined by $\varepsilon(d)\varepsilon(-1)$ and
$\varepsilon(d)\varepsilon(1)$, which differ). So $i_0\in\{0,k-1\}$, and then
$L\ge k-1$ or $R\ge k-1$, which contradicts $L,R\le k-2$.

Since every case contradicts the hypotheses, $\beta$ has no monochromatic
$k$-AP, and as it colours $(k-1)p+1$ integers, $W(2,k) > (k-1)p+1$.
\end{proof}

Conditions (A) and (B) are Rabung's criterion~\cite{Rab,RL} in a form that is easy
to check, and Pouly~\cite{Pou} proves that they are equivalent to
Rabung's published criterion. They also certify the cyclic
colouring of $\Z_p$ (with either colour assigned to $0$) to be free of
monochromatic nontrivial $k$-APs, so each Legendre prime is also an $h=2$ block
of modulus $p$.

\subsection*{Zipped bases}

\begin{lemma}[cyclic zipper]\label{lem:zipper}
Let $p$ be an odd prime with $p>k$, let $L(y)=1$ iff $y$ is a quadratic
nonresidue modulo $p$, and let $\oplus$ denote addition modulo $2$. Colour
$\{0,1,\dots,2(k-1)p\}$ by
\[
  c(x)=
  \begin{cases}
    L(x\bmod p)\oplus (x\bmod 2) & p\nmid x,\\
    \beta(j) & x=jp,\ 0\le j\le 2k-2,
  \end{cases}
\]
where $\beta$ has no constant run of length $k$ and $\beta(0),\beta(2),\dots,
\beta(2k-2)$ are not all equal (e.g.\ $\beta$ the indicator of $\{k-1\}$ for $k$
odd and of $\{0,k-1\}$ for $k$ even). Suppose, for every cyclic window
$u,\dots,u+k-1$ of $\Z_p$,
\begin{itemize}
  \item[(Z1)] $L$ is non-constant on the nonzero elements of the window, and
  \item[(Z2)] $i\mapsto L(u+i)\oplus i$ is non-constant on the nonzero elements
  of the window.
\end{itemize}
Then $c$ has no monochromatic $k$-AP, so $W(2,k)>2(k-1)p+1$. The cyclic
colouring of $\Z_{2p}$ obtained from the same rule (with $c(0),c(p)$ the two
distinct colours) also has no monochromatic nontrivial $k$-AP, i.e.\ it is a $2$-poly\-chromatic
block of modulus $2p$.
\end{lemma}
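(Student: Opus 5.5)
The plan is a direct case analysis on a putative monochromatic $k$-AP $x_i=a+iD$ ($0\le i\le k-1$, $D\ge1$) in $\{0,\dots,2(k-1)p\}$. It follows the proof of Lemma~\ref{lem:legendre}, but now also tracks the parity of the terms. I split according to whether $p\mid D$.

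\emph{Case $p\mid D$.} Write $D=pD'$. Then every term is a multiple of $p$, say $x_i=(j_0+iD')p$, with all indices in $\{0,\dots,2k-2\}$. The span gives $(k-1)D'\le 2k-2$, so $D'\in\{1,2\}$.
\begin{itemize}
\item If $D'=1$, the colours are $\beta(j_0),\dots,\beta(j_0+k-1)$. This is a constant run of length $k$ of $\beta$, which is excluded.
\item If $D'=2$, the span forces $j_0=0$, so the colours are $\beta(0),\beta(2),\dots,\beta(2k-2)$. These are not all equal by hypothesis.
\end{itemize}
For the suggested $\beta$, I will briefly check both properties directly; the point is that $\beta$ is nonconstant only near $k-1$, or near $0$ and $k-1$.

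\emph{Case $p\nmid D$.} Put $d=D\bmod p\neq0$ and $u\equiv ad^{-1}\pmod p$, so $x_i\equiv d(u+i)\pmod p$. Because $k<p$, at most one index $i$ has $u+i\equiv0$. That term is a multiple of $p$ and will simply be ignored, exactly as in Lemma~\ref{lem:legendre}. For every other $i$, multiplicativity of the Legendre symbol gives
\[
c(x_i)=L(d)\oplus L(u+i)\oplus\bigl((a+iD)\bmod 2\bigr).
\]
\begin{itemize}
\item If $D$ is even, the parity term is constant. Then constancy of $c$ on these terms means $L$ is constant on the nonzero elements of the window $\{u,\dots,u+k-1\}$, contradicting (Z1).
\item If $D$ is odd, the parity term is $a\oplus i$. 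Then constancy means $i\mapsto L(u+i)\oplus i$ is constant on the nonzero window elements, contradicting (Z2).
\end{itemize}
This handles every progression, so $c$ has no monochromatic $k$-AP, and since $c$ colours $2(k-1)p+1$ integers, $W(2,k)>2(k-1)p+1$.

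\emph{The cyclic colouring of $\Z_{2p}$.} The same argument works, because both $x\bmod p$ and $x\bmod 2$ are well defined on $\Z_{2p}$, and the parity of $D$ is well defined modulo $2p$. Take a $k$-AP with $D\not\equiv0\pmod{2p}$.
\begin{itemize}
\item If $D\equiv0\pmod p$, then $D\equiv p$. The terms then alternate between $x_0$ and $x_0+p$. If these are not multiples of $p$, they differ by the odd number $p$ with the same residue mod $p$, so their colours differ. If they are multiples of $p$, they are $0$ and $p$, which have distinct colours by construction. Either way the progression is not monochromatic, since $k\ge2$.
\item Otherwise, the reduction mod $p$ is a $k$-AP with nonzero difference, and the computation above with (Z1) and (Z2) applies verbatim.
\end{itemize}

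The step needing the most care is the parity bookkeeping in the case $p\nmid D$. One must check that the factor $L(d)$ and the offset $a$ are constants that drop out. One must also check that discarding the single multiple of $p$ leaves exactly the set "nonzero elements of the window" used in (Z1) and (Z2), which requires $k<p$. The rest is routine.
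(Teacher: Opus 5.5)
\textbf{Gap: the case $p\mid D$ in the interval colouring is incomplete.} Your argument there starts from a false claim. Writing $D=pD'$ does not make every term a multiple of $p$. It only makes every term congruent to $a$ modulo $p$, and the terms are multiples of $p$ only when $p\mid a$.

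So you never examine the progressions $a,a+p,\dots,a+(k-1)p$ with $p\nmid a$. For instance, $1,1+p,\dots,1+(k-1)p$ lies in $\{0,\dots,2(k-1)p\}$. Your sentence ``This handles every progression'' is therefore not justified.

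This omission matters, because it is exactly the family the parity term $x\bmod 2$ is there to break. All its terms have the same nonzero residue, hence the same value of $L$. Without the twist they would be monochromatic. Nothing else in the hypotheses covers them: (Z1) and (Z2) only concern differences not divisible by $p$, and the conditions on $\beta$ only concern multiples of $p$.

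\textbf{Repair.} You already give the needed argument in your $\Z_{2p}$ paragraph, and it is how the paper handles its case $d=p$. If $p\nmid a$, then $D'=2$ is impossible, because the span $2(k-1)p$ forces $a=0$. For $D=p$, since $p$ is odd, $c(a+ip)=L(a\bmod p)\oplus\bigl((a+i)\bmod 2\bigr)$. The colours therefore alternate and cannot be constant for $k\ge2$.

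Add this sub-case alongside your all-multiples sub-case, which gives a run of $\beta$. With it, the rest of your proof coincides with the paper's argument: the $p\nmid D$ analysis via (Z1) and (Z2), the $D'=2$ case, and the cyclic part.
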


\begin{proof}
Let $a,a+d,\dots,a+(k-1)d$ be a $k$-AP with $d\ge1$. Since
$(k-1)d\le 2(k-1)p$, we have $d\le 2p$.

\emph{$p\nmid d$.} The terms are distinct modulo $p$ (as $k<p$), so at most one
is a multiple of $p$; dropping it (its colour is irrelevant) and dilating by
$d^{-1}$ modulo $p$ identifies the nonzero residues with a cyclic window
$u,\dots,u+k-1$, on which $L(a+id)=L(d)\oplus L(u+i)$. If $d$ is even all terms
have one parity, so $c$ is constant iff $L$ is constant on the window's nonzero
elements, contradicting (Z1). If $d$ is odd the parities alternate, so $c$ is
constant iff $i\mapsto L(u+i)\oplus i$ is constant, contradicting (Z2).

\emph{$d=p$.} The non-multiple terms share one residue but alternate in parity.
Their colours then alternate and cannot be constant. If instead every term is a
multiple of $p$, the indices form a run of length $k$ in $\beta$, which is excluded.

\emph{$d=2p$.} Then $(k-1)d=2(k-1)p$ forces $a=0$, so the terms are
$0,2p,\dots,2(k-1)p$, coloured $\beta(0),\beta(2),\dots,\beta(2k-2)$, which are
not all equal.

For the cyclic colouring of $\Z_{2p}$ the same analysis applies with $d\le 2p$,
the case $d=p$ pairing $0$ and $p$ (distinct colours) or opposite parities.
\end{proof}

\subsection*{Bases used}
We take the larger of the two families at each $k$: for $k=9,11,12$ the zipped
bases from primes $p=2579,9697,29033$ (lengths $41{,}265$, $193{,}941$,
$638{,}727$), and for $k=10$ and $k\ge 13$ the Legendre bases (the prime $11{,}497$
gives length $103{,}474$ at $k=10$, and the $k\ge13$ primes are the Monroe/Pouly values).
These zipped primes are the published zipped records, and we recovered them
as the largest primes below $2{,}000{,}000$ passing (Z1) and (Z2). For $13\le k\le
16$ the zipped primes in $[p_0/2,\,2p_0]$ do not improve on the Legendre length (the
largest passing zipped primes are $49{,}739$ for $k=13$ and $112{,}571$ for
$k=14$). We do not claim any new two-colour records. As $h=2$ blocks we use both the
Legendre primes $p_0$ and the zipped moduli $2p$ of Lemma~\ref{lem:zipper}, which are
$5{,}158$, $7{,}754$, $19{,}394$, $58{,}066$, $99{,}478$ and $225{,}142$ for
$k=9,\dots,14$. Each was checked exhaustively.

\section{Results}\label{sec:results}

Table~\ref{tab:main} gives the three-colour certificates. Its columns give the
length $k$, the $h=3$ cubic-residue block prime $q$ of Lemma~\ref{lem:window},
the base length $n$ from Section~\ref{sec:base}, the new bound $W(3,k)>qn$ from
Theorem~\ref{thm:palette} and the best published bound we know of. Some block primes
repeat across consecutive rows (as with $q=6949$ at $k=16,17$ and
$q=576727$ at $k=25,26$) because the window condition for $k$ already certifies
$k+1$ by the monotonicity of Lemma~\ref{lem:window}.

\begin{table}[htbp]
\centering\small
\begin{tabular}{rrrrrl}
\toprule
$k$ & $q$ & base length $n$ & new bound $W(3,k)>qn$ & best published & source \\
\midrule
9 & 97 & 41\,265 & 4\,002\,705 & 932\,745 & M \\
10 & 97 & 103\,474 & 10\,036\,978 & 4\,173\,724 & M \\
11 & 1\,087 & 193\,941 & 210\,813\,867 & 18\,603\,731 & M \\
12 & 1\,087 & 638\,727 & 694\,296\,249 & 79\,134\,144 & M \\
13 & 1\,567 & 1\,642\,309 & 2\,573\,498\,203 & 251\,282\,317 & M \\
14 & 2\,551 & 3\,118\,350 & 7\,954\,910\,850 & 669\,256\,082 & M \\
15 & 4\,201 & 8\,523\,047 & 35\,805\,320\,447 & 2\,250\,960\,279 & M \\
16 & 6\,949 & 16\,370\,086 & 113\,755\,727\,614 & 9\,186\,001\,216 & M \\
17 & 6\,949 & 46\,397\,777 & 322\,418\,152\,373 & 33\,166\,190\,897 & P \\
18 & 17\,791 & 91\,079\,252 & 1\,620\,390\,972\,332 & 100\,289\,348\,022 & P \\
19 & 31\,333 & 250\,546\,915 & 7\,850\,386\,487\,695 & 259\,446\,988\,783 & P \\
20 & 77\,431 & 526\,317\,462 & 40\,753\,287\,400\,122 & 326\,368\,775\,450 & P \\
21 & 77\,431 & 1\,409\,670\,741 & 109\,152\,215\,146\,371 & 343\,546\,079\,421 & P \\
22 & 95\,803 & 2\,582\,037\,634 & 247\,366\,951\,450\,102 & 360\,723\,383\,392 & P \\
23 & 177\,823 & 6\,206\,141\,987 & 1\,103\,594\,786\,554\,301 & 377\,900\,687\,363 & P \\
24 & 246\,187 & 10\,980\,093\,212 & 2\,703\,156\,207\,582\,644 & 1\,082\,646\,556\,499 & P \\
25 & 576\,727 & 27\,333\,622\,969 & 15\,764\,038\,374\,042\,463 & 1\,082\,646\,556\,499 & P \\
26 & 576\,727 & 53\,126\,634\,776 & 30\,639\,564\,694\,458\,152 & 1\,965\,685\,486\,712 & C \\
27 & 1\,154\,653 & 109\,083\,036\,219 & 125\,953\,055\,019\,377\,007 & 4\,036\,072\,340\,103 & C \\
28 & 1\,376\,317 & 115\,957\,444\,078 & 159\,594\,201\,561\,100\,726 & 4\,290\,425\,430\,886 & C \\
\bottomrule
\end{tabular}
\caption{Three-colour certificates $W(3,k)>qn$ from Theorem~\ref{thm:palette}
with a cubic ($h=3$) block of modulus $q$ and a two-colour base of length $n$.
Sources are M, Monroe~\cite{Mon} (arXiv 2016, journal version
2026); P, Pouly~\cite{Pou} (2026); C, the BCT recurrence with $p=37$ applied by us
to published two-colour bounds (see the comparator paragraph).}
\label{tab:main}
\end{table}

Tables~\ref{tab:multi4} and~\ref{tab:multi56} give the strict multicolour
improvements for $r=4,5,6$. Their columns are $r$, $k$, the new bound, the
construction, and the best bound obtained by closing the published inputs. An entry
``$h$, $q$ $\times$ $W(r',k)$'' means Corollary~\ref{cor:block} applied
with an $h$-polychromatic block of modulus $q$ to the length $L$ realising our
$W(r',k)$ bound in the same column $k$, giving $W(r,k)>qL$. The label
``arc, $q$ $\times$ $W(3,k)$'' means Theorem~\ref{thm:palette} with the
$(5,3)$ arc palette of Lemma~\ref{lem:arc}. Of the $80$ cells
with $3\le r\le 6$ and $9\le k\le 28$, exactly $77$ strictly improve the closure
of the published inputs. The remaining cells, $(r,k)=(4,10)$, $(4,13)$ and
$(5,10)$, only tie it and are left out of the tables.

\begin{table}[htbp]
\centering\small
\begin{tabular}{rrlll}
\toprule
$r$ & $k$ & new bound & construction & published (closure) \\
\midrule
4 & 9 & $2.13\cdot10^{8}$ & $h=2$\, $q=5\,158$ $\times$ $W(2,k)$ & $1.40\cdot10^{8}$ \\
4 & 11 & $1.29\cdot10^{10}$ & $h=4$\, $q=61$ $\times$ $W(3,k)$ & $3.46\cdot10^{9}$ \\
4 & 12 & $4.24\cdot10^{10}$ & $h=4$\, $q=61$ $\times$ $W(3,k)$ & $3.71\cdot10^{10}$ \\
4 & 14 & $1.82\cdot10^{12}$ & $h=4$\, $q=229$ $\times$ $W(3,k)$ & $7.48\cdot10^{11}$ \\
4 & 15 & $8.20\cdot10^{12}$ & $h=4$\, $q=229$ $\times$ $W(3,k)$ & $5.19\cdot10^{12}$ \\
4 & 16 & $1.48\cdot10^{14}$ & $h=4$\, $q=1\,301$ $\times$ $W(3,k)$ & $1.79\cdot10^{13}$ \\
4 & 17 & $4.19\cdot10^{14}$ & $h=4$\, $q=1\,301$ $\times$ $W(3,k)$ & $1.35\cdot10^{14}$ \\
4 & 18 & $2.11\cdot10^{15}$ & $h=4$\, $q=1\,301$ $\times$ $W(3,k)$ & $4.88\cdot10^{14}$ \\
4 & 19 & $1.02\cdot10^{16}$ & $h=4$\, $q=1\,301$ $\times$ $W(3,k)$ & $3.49\cdot10^{15}$ \\
4 & 20 & $1.14\cdot10^{17}$ & $h=4$\, $q=2\,789$ $\times$ $W(3,k)$ & $1.46\cdot10^{16}$ \\
4 & 21 & $3.04\cdot10^{17}$ & $h=4$\, $q=2\,789$ $\times$ $W(3,k)$ & $9.94\cdot10^{16}$ \\
4 & 22 & $9.95\cdot10^{17}$ & $h=4$\, $q=4\,021$ $\times$ $W(3,k)$ & $3.17\cdot10^{17}$ \\
4 & 23 & $1.22\cdot10^{19}$ & $h=4$\, $q=11\,093$ $\times$ $W(3,k)$ & $1.75\cdot10^{18}$ \\
4 & 24 & $3.00\cdot10^{19}$ & $h=4$\, $q=11\,093$ $\times$ $W(3,k)$ & $5.24\cdot10^{18}$ \\
4 & 25 & $2.86\cdot10^{20}$ & $h=4$\, $q=18\,149$ $\times$ $W(3,k)$ & $3.11\cdot10^{19}$ \\
4 & 26 & $6.40\cdot10^{20}$ & $h=4$\, $q=20\,873$ $\times$ $W(3,k)$ & $1.13\cdot10^{20}$ \\
4 & 27 & $3.52\cdot10^{21}$ & $h=4$\, $q=27\,917$ $\times$ $W(3,k)$ & $4.58\cdot10^{20}$ \\
4 & 28 & $5.91\cdot10^{21}$ & $h=4$\, $q=37\,049$ $\times$ $W(3,k)$ & $4.98\cdot10^{20}$ \\
\bottomrule
\end{tabular}
\caption{Strict multicolour improvements for $r=4$ from
Corollary~\ref{cor:block}. The last column is the comparator described below
(published inputs from 2016 to 2026, closed under the BCT and Xu recurrences).}
\label{tab:multi4}
\end{table}

\begin{table}[htbp]
\centering\small
\begin{tabular}{rrlll}
\toprule
$r$ & $k$ & new bound & construction & published (closure) \\
\midrule
5 & 9 & $1.49\cdot10^{9}$ & $h=5$\, $q=7$ $\times$ $W(4,k)$ & $9.79\cdot10^{8}$ \\
5 & 11 & $2.29\cdot10^{11}$ & $h=3$\, $q=1\,087$ $\times$ $W(3,k)$ & $3.81\cdot10^{10}$ \\
5 & 12 & $1.34\cdot10^{12}$ & arc, $q=1\,931$ $\times$ $W(3,k)$ & $4.82\cdot10^{11}$ \\
5 & 13 & $7.36\cdot10^{12}$ & arc, $q=2\,861$ $\times$ $W(3,k)$ & $2.92\cdot10^{12}$ \\
5 & 14 & $4.88\cdot10^{13}$ & arc, $q=6\,131$ $\times$ $W(3,k)$ & $9.72\cdot10^{12}$ \\
5 & 15 & $3.51\cdot10^{14}$ & arc, $q=9\,791$ $\times$ $W(3,k)$ & $8.82\cdot10^{13}$ \\
5 & 16 & $9.03\cdot10^{15}$ & $h=5$\, $q=61$ $\times$ $W(4,k)$ & $3.04\cdot10^{14}$ \\
5 & 17 & $2.56\cdot10^{16}$ & $h=5$\, $q=61$ $\times$ $W(4,k)$ & $2.56\cdot10^{15}$ \\
5 & 18 & $4.45\cdot10^{17}$ & $h=5$\, $q=211$ $\times$ $W(4,k)$ & $9.27\cdot10^{15}$ \\
5 & 19 & $2.16\cdot10^{18}$ & $h=5$\, $q=211$ $\times$ $W(4,k)$ & $6.63\cdot10^{16}$ \\
5 & 20 & $7.17\cdot10^{19}$ & $h=5$\, $q=631$ $\times$ $W(4,k)$ & $3.35\cdot10^{17}$ \\
5 & 21 & $1.92\cdot10^{20}$ & $h=5$\, $q=631$ $\times$ $W(4,k)$ & $2.29\cdot10^{18}$ \\
5 & 22 & $6.28\cdot10^{20}$ & $h=5$\, $q=631$ $\times$ $W(4,k)$ & $7.30\cdot10^{18}$ \\
5 & 23 & $8.58\cdot10^{21}$ & $h=5$\, $q=701$ $\times$ $W(4,k)$ & $4.03\cdot10^{19}$ \\
5 & 24 & $2.10\cdot10^{22}$ & $h=5$\, $q=701$ $\times$ $W(4,k)$ & $1.21\cdot10^{20}$ \\
5 & 25 & $2.01\cdot10^{23}$ & $h=5$\, $q=701$ $\times$ $W(4,k)$ & $9.03\cdot10^{20}$ \\
5 & 26 & $1.29\cdot10^{24}$ & $h=5$\, $q=2\,011$ $\times$ $W(4,k)$ & $3.50\cdot10^{21}$ \\
5 & 27 & $1.10\cdot10^{25}$ & $h=5$\, $q=3\,121$ $\times$ $W(4,k)$ & $1.42\cdot10^{22}$ \\
5 & 28 & $1.85\cdot10^{25}$ & $h=5$\, $q=3\,121$ $\times$ $W(4,k)$ & $1.54\cdot10^{22}$ \\
6 & 9 & $2.06\cdot10^{10}$ & $h=2$\, $q=5\,158$ $\times$ $W(3,k)$ & $6.85\cdot10^{9}$ \\
6 & 10 & $1.15\cdot10^{11}$ & $h=2$\, $q=11\,497$ $\times$ $W(3,k)$ & $9.16\cdot10^{10}$ \\
6 & 11 & $1.40\cdot10^{13}$ & $h=3$\, $q=1\,087$ $\times$ $W(4,k)$ & $4.19\cdot10^{11}$ \\
6 & 12 & $4.60\cdot10^{13}$ & $h=3$\, $q=1\,087$ $\times$ $W(4,k)$ & $6.26\cdot10^{12}$ \\
6 & 13 & $3.52\cdot10^{14}$ & $h=2$\, $q=136\,859$ $\times$ $W(3,k)$ & $3.80\cdot10^{13}$ \\
6 & 14 & $4.65\cdot10^{15}$ & $h=3$\, $q=2\,551$ $\times$ $W(4,k)$ & $1.61\cdot10^{14}$ \\
6 & 15 & $3.44\cdot10^{16}$ & $h=3$\, $q=4\,201$ $\times$ $W(4,k)$ & $1.37\cdot10^{15}$ \\
6 & 16 & $1.03\cdot10^{18}$ & $h=3$\, $q=6\,949$ $\times$ $W(4,k)$ & $1.00\cdot10^{16}$ \\
6 & 17 & $2.91\cdot10^{18}$ & $h=3$\, $q=6\,949$ $\times$ $W(4,k)$ & $9.62\cdot10^{16}$ \\
6 & 18 & $3.75\cdot10^{19}$ & $h=3$\, $q=17\,791$ $\times$ $W(4,k)$ & $5.37\cdot10^{17}$ \\
6 & 19 & $3.20\cdot10^{20}$ & $h=3$\, $q=31\,333$ $\times$ $W(4,k)$ & $3.61\cdot10^{18}$ \\
6 & 20 & $8.80\cdot10^{21}$ & $h=3$\, $q=77\,431$ $\times$ $W(4,k)$ & $9.04\cdot10^{18}$ \\
6 & 21 & $2.90\cdot10^{22}$ & $h=6$\, $q=151$ $\times$ $W(5,k)$ & $5.26\cdot10^{19}$ \\
6 & 22 & $9.53\cdot10^{22}$ & $h=3$\, $q=95\,803$ $\times$ $W(4,k)$ & $1.68\cdot10^{20}$ \\
6 & 23 & $2.43\cdot10^{24}$ & $h=6$\, $q=283$ $\times$ $W(5,k)$ & $9.26\cdot10^{20}$ \\
6 & 24 & $7.38\cdot10^{24}$ & $h=3$\, $q=246\,187$ $\times$ $W(4,k)$ & $2.77\cdot10^{21}$ \\
6 & 25 & $1.65\cdot10^{26}$ & $h=3$\, $q=576\,727$ $\times$ $W(4,k)$ & $2.08\cdot10^{22}$ \\
6 & 26 & $1.86\cdot10^{27}$ & $h=6$\, $q=1\,447$ $\times$ $W(5,k)$ & $1.01\cdot10^{23}$ \\
6 & 27 & $1.59\cdot10^{28}$ & $h=6$\, $q=1\,447$ $\times$ $W(5,k)$ & $4.40\cdot10^{23}$ \\
6 & 28 & $2.67\cdot10^{28}$ & $h=6$\, $q=1\,447$ $\times$ $W(5,k)$ & $4.79\cdot10^{23}$ \\
\bottomrule
\end{tabular}
\caption{Strict multicolour improvements for $r=5$ and $r=6$ from
Corollary~\ref{cor:block} and, in the rows marked ``arc'', from
Lemma~\ref{lem:arc}.}
\label{tab:multi56}
\end{table}

\subsection*{Blocks used}
The $h=3$ blocks are the cubic primes $q$ of Table~\ref{tab:main}. For $h=4$ (quartic) the
blocks are the moduli $13$, $29$, $61$, $61$, $61$, $229$, $229$, $1301$, $1301$, $1301$, $1301$, $2789$, $2789$, $4021$, $11093$, $11093$, $18149$, $20873$, $27917$, $37049$ for $k=9,\dots,28$
respectively. For $h=5$ they are $7$, $11$, $11$, $13$, $13$, $13$, $17$, $61$, $61$, $211$, $211$, $631$, $631$, $631$, $701$, $701$, $701$, $2011$, $3121$, $3121$, where the entries at $k=9,12,13,14,15$ are pigeonhole
blocks. For $h=6$ they are $7$, $7$, $11$, $13$, $13$, $19$, $19$, $43$, $43$, $43$, $67$, $67$, $151$, $151$, $283$, $283$, $397$, $1447$, $1447$, $1447$, with a pigeonhole block at $k=11$.

\subsection*{Comparator}
The ``best published'' and ``published (closure)'' columns are our best attempt at a
maximum over the published literature and are not certified. Each entry is the largest published direct value (Monroe's Table~1~\cite{Mon},
2016/2026, for $r\le 4$, $k\le 14$; the BCT figures~\cite{BCT}, 2018, for
$r=5,6$, $k\le 13$; Pouly's three-colour table~\cite{Pou}, 2026, for $r=3$,
$k\ge 17$; and the direct Rabung rows of Pouly's atlas~\cite{PouAtlas}, 2026,
for $r=3,\dots,6$), closed under
the BCT recurrence (any prime $p$ with $p-\lfloor p/r\rfloor<k$, as allowed by
the remark after~\cite[Thm.~2.1]{BCT}), Xu's product (with the published
two-colour moduli $3389,11497,17863,58013,136859,239873$ for $k=9,\dots,14$ and
Legendre primes beyond), and monotonicity in $r$. We derived some of these comparator
values ourselves by closing published inputs under BCT and Xu, so they are
not published numbers. For $r=3$ and $k=26,27,28$ this closure is the BCT remark
with $p=37$ applied to the published $W(2,k)$, giving $1{,}965{,}685{,}486{,}712$,
$4{,}036{,}072{,}340{,}103$ and $4{,}290{,}425{,}430{,}886$. These exceed Pouly's
published values ($1{,}221{,}912{,}599{,}848$, $2{,}508{,}909{,}833{,}037$,
$2{,}667{,}021{,}213{,}794$), which used $p=23\le k$. For $r=6$ and
$k=18,19,20$ the comparator agrees to the displayed precision with Pouly's
six-colour products~\cite{Pou} ($W(6,18)>537{,}310{,}517{,}724{,}727{,}076$,
$W(6,19)>3{,}611{,}313{,}480{,}298{,}317{,}085$,
$W(6,20)>9{,}040{,}715{,}030{,}019{,}241{,}602$). Pouly's report says its
own recurrence closure covers at most four colours. Apart from these
six-colour products we did not find published five- or six-colour values for
$k\ge14$ other than the atlas rows.

\section{Limits and block sizes}\label{sec:limits}

The method can do no better than the numbers it blows up. Here is the precise
statement, in the generality of Theorem~\ref{thm:palette}.

\begin{proposition}\label{prop:limits}
Let $S\colon\Z_q\to\binom{[r]}{s}$ satisfy condition $(\star)$ of
Theorem~\ref{thm:palette}. Then $W(r-s+1,k)>(k-1)q$. As a special case, if
$m\colon\Z_q\to[h]$ is $k$-polychromatic with $h\ge 2$, then
$q\le (W(2,k)-1)/(k-1)$, and the bound $q\,(W(2,k)-1)$ given by
Corollary~\ref{cor:block} for $r=3$ is at most $(W(2,k)-1)^2/(k-1)$.
\end{proposition}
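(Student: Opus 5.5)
The plan is to collapse the $s$-element palettes to a single colour drawn from a fixed set of $r-s+1$ colours, and then unroll the resulting colouring of $\Z_q$ periodically over an interval of length $(k-1)q$.

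\emph{Step 1: choosing a single colour per residue.} Let $T=[r-s+1]$. For each $u\in\Z_q$ the complement $[r]\setminus S(u)$ has only $r-s$ elements, so $S(u)\cap T\neq\emptyset$. Choose $m(u)\in S(u)\cap T$. This gives a colouring $m\colon\Z_q\to T$ with $m^{-1}(c)\subseteq A_c$ for every $c\in T$. By $(\star)$, no colour class of $m$ contains a $k$-AP of $\Z_q$ with nonzero difference.

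\emph{Step 2: unrolling to an interval.} Colour the interval $\{0,1,\dots,(k-1)q-1\}$ by $x\mapsto m(x\bmod q)$. Let $x_0+iD$, $0\le i<k$, $D\ge1$, be a monochromatic $k$-AP in this interval, and split into two cases.
\begin{itemize}
\item If $q\nmid D$, the residues $x_i\bmod q$ form a $k$-AP in $\Z_q$ with nonzero difference. It lies in one class of $m$, contradicting Step~1.
\item If $q\mid D$, then $D\ge q$, so the progression spans $(k-1)D\ge(k-1)q$. This exceeds $(k-1)q-1$, the largest difference available in the interval, so this case cannot occur.
\end{itemize}
Hence the colouring has no monochromatic $k$-AP, and $W(r-s+1,k)>(k-1)q$. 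This is essentially the first case of the proof of Theorem~\ref{thm:palette}, with the second case removed by the length constraint.

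\emph{Step 3: the polychromatic special case.} Let $m$ be $k$-polychromatic with $h\ge2$. Merge its colours into two groups: classes $1,\dots,h-1$ versus class $h$. The result is a $2$-colouring of $\Z_q$ in which every $k$-AP meets both merged classes, since it already meets class $1$ and class $h$. Apply the first part with $r=2$, $s=1$ and singleton palettes $S(u)=\{\text{merged colour of }u\}$. Each $A_c$ is then a merged class, which contains no $k$-AP because every $k$-AP meets the other class, so $(\star)$ holds. The first part gives $W(2,k)>(k-1)q$, that is, $q\le (W(2,k)-1)/(k-1)$ since both sides are integers. Multiplying this inequality by $W(2,k)-1$ bounds the $r=3$ output $q\,(W(2,k)-1)$ of Corollary~\ref{cor:block} by $(W(2,k)-1)^2/(k-1)$.

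Nothing here is hard. The only point needing care is the interval length in Step~2: the interval must be short enough that no progression with difference a multiple of $q$ fits inside it.
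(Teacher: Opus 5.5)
Your proof is correct and follows the paper's argument essentially step for step: choose one colour from each palette inside a fixed $(r-s+1)$-set, lay the result out periodically on $\{0,\dots,(k-1)q-1\}$, and rule out $q\mid D$ by length. The only cosmetic difference is in the special case. There you merge the $h$ classes into two and apply the first part with $r=2$, $s=1$, whereas the paper reads it off as the case $r=h$, $s=h-1$; both give $W(2,k)>(k-1)q$.
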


\begin{proof}
Fix $T\subseteq[r]$ with $|T|=r-s+1$. Since $|[r]\setminus T|=s-1$, every
palette $S(u)$ meets $T$, so we can choose $c(u)\in S(u)\cap T$. For each $t\in T$ the
class $c^{-1}(t)$ lies in $A_t$, so the colouring $c\colon\Z_q\to T$ has no
monochromatic nontrivial $k$-AP. Lay it out periodically on
$\{0,\dots,(k-1)q-1\}$. A $k$-AP there with difference $D$ and $q\nmid D$
reduces to a nontrivial $k$-AP of $\Z_q$ and so is not monochromatic, while
$q\mid D$ would force a span $(k-1)D\ge(k-1)q$, which does not fit. This proves
$W(r-s+1,k)>(k-1)q$. For a $k$-polychromatic block, merging the classes into
two nonempty groups gives the case $r=h$, $s=h-1$, so $W(2,k)>(k-1)q$.
\end{proof}

So for three colours the method gives at most about $W(2,k)^2/k$. It
cannot reach the super-exponential growth $W(3,k)>2^{k\log^{*}k/4}$ proved by
Fox and Hunter~\cite{FH}, and with $W(2,k)\ge(1-o(1))\,k\,2^{k-1}$ from
Campos, Fox and Schildkraut~\cite{CFS} it does not change any asymptotic growth rate.
A family of $k$-polychromatic blocks
with $q^{1/k}\to\infty$, for any fixed $h\ge2$, would give
$W(2,k)^{1/k}\to\infty$, a well-known question of Erd\H{o}s
(see~\cite{EP138}). Blocks of size $c^k$ with
$c>3/2$ are not ruled out by the proposition, which only ties super-exponential blocks to that question.

The random colouring and the local lemma give $k$-polychromatic $h$-colourings
of $\Z_q$ for $q$ as large as $(h/(h-1))^{k}$ up to a polynomial factor in $k$
(this is contained in~\cite[Cor.~3.3]{FH}), but not explicitly. For the cubic
blocks of Section~\ref{sec:residue} we have the following explicit bounds.

\begin{proposition}\label{prop:cubic}
Let $q>k$ be a prime with $q\equiv1\pmod 3$, let $\chi$ be a cubic character modulo $q$,
and let $m$ be the cubic-residue colouring of Section~\ref{sec:residue} (with
any value $m(0)$).
\begin{enumerate}
\item[(a)] If $\bigl|\sum_{y\in I}\chi(y)\bigr|<(k-1)/2$ for every set $I$ of
$k$ consecutive residues modulo $q$, then $m$ is $k$-polychromatic. The
hypothesis holds whenever $\sqrt q\,\log q<(k-1)/2$, and for every
$\varepsilon>0$ there is $k_0(\varepsilon)$ such that it holds for all
$k\ge k_0(\varepsilon)$ and all such primes $q\le k^{4-\varepsilon}$.
\item[(b)] If $q\ge k^2 4^k$, then $m$ is not $k$-polychromatic.
\end{enumerate}
It follows that the largest prime $q_3(k)$ for which the cubic block is
$k$-polychromatic satisfies $k^{4-\varepsilon}\ll q_3(k)<k^2 4^k$.
\end{proposition}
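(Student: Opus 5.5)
By Lemma~\ref{lem:window}, the cubic colouring (with $\sigma=\mathrm{id}$, $e=h=3$) is $k$-polychromatic if and only if, for every shift $s\in\Z_3$ and every cyclic window $I$ of $k$ consecutive residues, $f_s$ takes all three colours on $I$. On nonzero residues $f_s$ is the cubic class shifted by $s$, and shifting only permutes the colours. So it is \emph{sufficient} that the nonzero elements of every window meet all three cubic classes, whatever $m(0)$ is. Part~(a) proves this sufficient condition. Part~(b) exhibits a window whose nonzero elements, and indeed all of whose elements, miss one class.

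\emph{Part (a).} Suppose a window $I$ misses a class. Its nonzero elements, of which there are at least $k-1$, then take values of $\chi$ in only two cube roots of unity, say $a$ elements give $\omega^i$ and $b$ give $\omega^{i'}$, with $a+b\ge k-1$. The term $\chi(0)=0$ does not contribute. Since $\omega^i\ne\omega^{i'}$ are at angle $2\pi/3$,
\[
\Bigl|\sum_{y\in I}\chi(y)\Bigr|^2=a^2+b^2-ab\ge \tfrac14(a+b)^2 .
\]
Hence $|\sum_{y\in I}\chi(y)|\ge (k-1)/2$, contradicting the hypothesis.

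For the first sufficient condition, note that a cyclic window is an honest interval of integers modulo $q$. The P\'olya--Vinogradov inequality then gives $|\sum_I\chi|<\sqrt q\log q$. For $q\le k^{4-\varepsilon}$ I would use Burgess' bound, which applies to a cubic character modulo a prime $q$ for every $r\ge1$:
\[
\Bigl|\sum_{y\in I}\chi(y)\Bigr|\ll_{r,\delta} k^{1-1/r}q^{(r+1)/(4r^2)+\delta}.
\]
This is $o(k)$ as soon as $q\le k^{4r/(r+1)-\eta}$. Choosing $r$ large enough that $4r/(r+1)>4-\varepsilon$, and then $\delta$ small, gives $|\sum_I\chi|<(k-1)/2$ for all $k\ge k_0(\varepsilon)$. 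The only care needed is bookkeeping of the constants.

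\emph{Part (b).} Fix the class $2$ (the class of nonzero $y$ with $\chi(y)=\omega^2$). I count $x\in\Z_q$ such that none of $x,\dots,x+k-1$ is $0$ and none lies in class $2$. For $y\ne0$, write the complement of the class-$2$ indicator as
\[
1-\mathbf 1_2(y)=\tfrac23-\tfrac13\bigl(\omega^{-2}\chi(y)+\omega^{-4}\chi^2(y)\bigr).
\]
Summing over $x$ the product over $i<k$ of these expressions at $y=x+i$ gives the desired count, up to an error of at most $k$ from the windows containing $0$. Expanding the product, the term with all exponents zero contributes $q(2/3)^k$. Every other term has the form
\[
c\sum_x\prod_i\chi^{a_i}(x+i),\qquad a_i\in\{0,1,2\}\ \text{not all }0 .
\]
The polynomial $\prod_i(x+i)^{a_i}$ has distinct linear factors with some exponent not divisible by $3$, so it is not a constant times a cube. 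Weil's bound therefore gives $|\sum_x\cdots|\le (k-1)\sqrt q$. The coefficients $|c|$ sum to at most $(2/3+2/3)^k=(4/3)^k$. The count is thus at least
\[
q(2/3)^k-(4/3)^k k\sqrt q-k,
\]
which is positive once $\sqrt q\ge k2^k$, that is, for $q\ge k^24^k$ (with a little slack to absorb the $-k$ term, or by adjusting the constant in the Weil step). Such a window consists of $k$ nonzero residues with no element of class $2$. Hence the $k$-AP obtained from it with $d=1$ (so $s=0$ in Lemma~\ref{lem:window}) misses colour $2$ whatever $m(0)$ is, so $m$ is not $k$-polychromatic.

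The final sentence of the statement then follows from (a) and (b), using the monotonicity in $k$ noted after Lemma~\ref{lem:window}. I expect the main obstacle to be the Burgess step: matching the exponent to obtain exactly $k^{4-\varepsilon}$ and checking that the Burgess range hypotheses hold for the cubic character with $r$ large.
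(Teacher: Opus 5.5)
Your proposal is correct and follows the paper's proof essentially step for step: the same reduction through Lemma~\ref{lem:window}, the same lower bound $(k-1)/2$ for the character sum over a window whose nonzero elements avoid a class (your identity $a^2+b^2-ab\ge\frac14(a+b)^2$ is equivalent to the paper's observation that $\operatorname{Re}(\omega^{-j}\chi(y))=-1/2$ on such a window), P\'olya--Vinogradov and Burgess for the sufficient conditions, and the same character expansion with Weil's bound in (b). The one thing to tidy is the last display of (b): with $k\sqrt q$ the bound collapses to $-k$ at $\sqrt q=k2^k$, so keep the factor $k-1$ you already obtained from Weil. That gives $(2/3)^k\sqrt q\,\bigl(\sqrt q-(k-1)2^k\bigr)-k\ge k(8/3)^k-k>0$ for $\sqrt q\ge k2^k$.
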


\begin{proof}
(a) Suppose $m$ is not $k$-polychromatic. By Lemma~\ref{lem:window} (with
$\sigma=\mathrm{id}$) some shift $f_s$ misses a colour on some window $I$ of
$k$ consecutive residues; then the nonzero elements of $I$, of which there
are at least $k-1$, avoid one cubic class, say $\chi(y)\ne\omega^{j}$ for all
$y\in I\setminus\{0\}$. Each such $y$ has
$\operatorname{Re}\bigl(\omega^{-j}\chi(y)\bigr)=-1/2$. Since also $\chi(0)=0$, this gives
$\bigl|\sum_{y\in I}\chi(y)\bigr|\ge(k-1)/2$. The first sufficient condition
follows from the inequality of P\'olya and Vinogradov~\cite{IK}
$\bigl|\sum_{y\in I}\chi(y)\bigr|<\sqrt q\log q$. Burgess's bound~\cite{IK} gives
$\sum_{y\in I}\chi(y)=o(|I|)$ uniformly for intervals with
$|I|\ge q^{1/4+\delta}$, which yields the second.

(b) For $y\ne0$, $\mathbf 1[\chi(y)\ne\omega^{j}]
=\tfrac13\bigl(2-\omega^{-j}\chi(y)-\omega^{j}\bar\chi(y)\bigr)$. Let $N$ be the
number of $x\in\F_q$ such that $x+1,\dots,x+k$ are all nonzero and none lies
in the class $\chi=\omega^{j}$. Expanding the product over $i=1,\dots,k$ and
summing over all $x$ gives $2^k q$ from the constant term. Every other term is
a coefficient times $\sum_x\chi\bigl(\prod_i(x+i)^{a_i}\bigr)$ with
$a_i\in\{0,1,2\}$ not all zero. The polynomial $\prod_i(x+i)^{a_i}$ is not a
cube, so by Weil's bound~\cite{IK} each such sum has modulus at most $(k-1)\sqrt q$, and
the coefficients have total modulus at most $4^k-2^k$. The $k$ values of $x$
with some $x+i=0$ contribute at most $4^k$ each. Putting these together,
\[
3^kN\ \ge\ 2^kq-(4^k-2^k)(k-1)\sqrt q-k\,4^k .
\]
With $\sqrt q=t\ge k2^k$ the right-hand side is at least
$2^kt^2-(k-1)2^k\,t\cdot 2^k-k4^k\ge k4^k(2^k-1)>0$. So some window
$x+1,\dots,x+k$ of nonzero residues avoids the class $\omega^{j}$, and the
progression $x+1,\dots,x+k$ of difference $1$ misses the colour of that class.
\end{proof}

The observed values in Table~\ref{tab:main} lie well inside these bounds.
Heuristically $q_h(k)$ behaves like $(h/(h-1))^k$
up to a factor polynomial in $k$, so that the three-colour bounds of this method
grow roughly like $3^k$. We have not proved this.

\section{Computations}\label{sec:computations}

We checked every hypothesis behind the certificates deterministically, with
no sampling. The programs are custom C compiled with GCC~14.2 at \texttt{-O2} or
\texttt{-O3} and parallelised with OpenMP or POSIX threads, with independent
reimplementations as cross-checks. The machines used were an 8-vCPU Intel Xeon
virtual machine with 16~GB of RAM running Debian Linux (below, the Linux
machine) and an Apple Mac Studio with an M4 Max chip (16 cores and 64~GB of RAM)
running macOS~26 with Apple clang~17.

The window criterion of Lemma~\ref{lem:window} for the power-residue blocks was
evaluated by the scanners \texttt{scanfast.c} and \texttt{scanr.c}. Each cubic
($h=3$) block was then rechecked by direct enumeration. For $q\le 576{,}727$ the
verifier \texttt{exhaust\_omp.c} enumerated every pair $(a,d)$ with $1\le d\le q-1$, that is all
$q(q-1)$ progressions. For $q=576{,}727$ this means $332{,}613{,}455{,}802$
progressions, which took $840$~s on the Linux machine. For
$q=1{,}154{,}653$ and $q=1{,}376{,}317$ the enumeration ran over $d\le(q-1)/2$,
which is complete because reversing a progression replaces $d$ by $-d$ without
changing the colours met. These runs covered $666{,}611{,}197{,}878$ and
$947{,}123{,}554{,}086$ progressions and used the sharded variant
\texttt{exhaust\_rng.c} as eight shards on the Mac Studio.
None found a bad progression.

The $32$ distinct $h=4,5,6$ blocks, including the pigeonhole blocks, were checked
by the direct enumerator \texttt{exhr.c}, which computes the index by walking the
powers of a primitive root $g$.
The arc palettes were found with \texttt{arcscan.c}, which tests conditions (i)
and (ii) of Lemma~\ref{lem:arc}, and each of the four $(5,3)$ palettes used
was then checked against $(\star)$ directly by \texttt{exharc.c}, which
enumerates all $q(q-1)/2$ pairs $(x,d)$ with $1\le d\le(q-1)/2$ and computes
the index by walking powers of $g$ (under one second per palette on the Linux
machine), and again by an independent pure Python check \texttt{arcpy.py} over
all $q(q-1)$ pairs. As negative controls the same palettes fail at $k-1$, and the
palettes at $q=1931$ and $q=9791$ fail with the other unit $a$.

The two-colour bases for $k=9,11,12$ were brute-forced in full by an
independently written bitset checker \texttt{chk.c} (POSIX threads), which checked
$18{,}543{,}870{,}661$ progressions for $k=12$ in $274$~s, $1{,}880{,}558{,}604$
for $k=11$ and $106{,}404{,}382$ for $k=9$. For $k\ge13$ the certificate checker
\texttt{vdw\_cert.c} verified the criterion of Lemma~\ref{lem:legendre} for the
Legendre bases, taking from a few seconds to under a minute per prime for
$k=13,\dots,28$. The zipper search used \texttt{zipscan.c} and
\texttt{zipscan2.c}; every zipped prime passing for $k=9$ below $3000$ and for
$k=10$ below $6000$, $197$ positive controls in all, was fully brute-forced,
together with the negative control $p=2591$. The cyclic $h=2$ blocks of
Lemma~\ref{lem:zipper} were checked exhaustively by \texttt{exhf.c}.

For end-to-end checks an independent generator \texttt{gen.py} (pure Python~3)
built complete three-colourings. The colouring behind $W(3,9)>2{,}629{,}961$
(an older, Legendre base) was tested by \texttt{chk.c} and by a second
independent verifier \texttt{vindep.c}, and the colouring behind the table
entry $W(3,10)>10{,}036{,}978$ by \texttt{chk.c}
($5{,}596{,}713{,}168{,}872$ progressions). Stacked constructions of every
recurrence type ($h=2,3,4$ and $r=2,\dots,6$) at $k=6$ were brute-forced in full,
the largest being six colours over length $676{,}235$, i.e.\
$45{,}729{,}039{,}405$ progressions. A planted negative control was detected, as
were the negative controls $q=103,109$ at $k=9$, which fail. The complete
three-colouring of length $4{,}002{,}705$ behind the new $k=9$ record
(Theorem~\ref{thm:palette} with the cubic block $q=97$ and the zipped base of
length $41{,}265$) was brute-forced by \texttt{chk.c}, which went through $1{,}001{,}350{,}955{,}962$
progressions in $258$~s and found none monochromatic.

A search with PySAT (version 1.9.dev16) and Glucose~4 over all
three-colourings of $\Z_{99}$ for $k=9$ returned UNSAT, so $\Z_{99}$ has no
$9$-polychromatic $3$-colouring.

\section{Related work}\label{sec:related}

Polychromatic and panchromatic colourings also appear in nearby settings.
Axenovich, Goldwasser, Lidick\'y, Martin, Offner, Talbot and
Young~\cite{Axen} study polychromatic colourings for translates of a fixed set,
not progressions. Akhmejanova and Longbrake~\cite{AL} study panchromatic and
$(a{:}b)$-colourings of hypergraphs, and our blocks are panchromatic colourings of
the cyclic $k$-AP hypergraph. Liber~\cite{Lib} studies the cyclic van der
Waerden hypergraphs. Xiu, Li, Liang and Xu~\cite{XLLX} introduced set-colouring
van der Waerden numbers, in which each integer receives a set of colours as in
Theorem~\ref{thm:palette}. We have only seen the abstract of that paper.

\section{Code and data availability}

All source code, block parameters, base primes, logs and the record table are in
the accompanying archive, deposited at Zenodo with
doi:10.5281/zenodo.23489549 (\url{https://doi.org/10.5281/zenodo.23489549}). The archive
includes \texttt{records.json} and \texttt{blocks.json}, and every certificate can be
regenerated and checked with the included programs.

\section*{Acknowledgements}

The two-colour base primes for $k\ge 13$ are those of Monroe~\cite{Mon} and
Pouly~\cite{Pou,PouAtlas}.

\end{document}